\newtheorem{teor}{Theorem}
\newtheorem{cor}{Corollary}
\newtheorem{prop}{Proposition}
\newtheorem{lem}{Lemma}
\theoremstyle{definition}
\numberwithin{equation}{section}
\begin{document}

\title{A note on last-success-problem}

\author{J.M. Grau Ribas}
\address{Departamento de Matemáticas, Universidad de Oviedo\\
Avda. Calvo Sotelo s/n, 33007 Oviedo, Spain}
\email{grau@uniovi.es}

\subjclass[2010]{Primary 54C40, 14E20; Secondary 46E25, 20C20}

\begin{abstract}
We consider the Last-Success-Problem with $n$ independent Bernoulli random variables with parameters $p_i>0$. We improve the lower bound provided by F.T. Bruss for the probability of winning and provide an alternative proof to the one given in \cite {BR2}
for the lower bound  ($1/e$) when $R:=\sum_{i=1}^n (p_i/(1-p_i))\geq1$. We also consider a modification of the game which consists in not considering it a failure when all the random variables take the value of 0 and the game is repeated as many times as necessary until a  $"1"$  appears.  { We prove that the probability of winning in this game when $R\leq1$ is lower-bounded by $0.5819...=\frac{1}{e-1} $}. Finally, we consider the variant in which the player can choose between participating in the game in its standard version or predict that all the random variables will take the value 0.
\end{abstract}
\maketitle
\keywords{Keywords: Last-Success-Problem; Lower bounds; Odds-Theorem; Optimal stopping; Optimal threshold}

\subjclassname{ 60G40, 62L15}
\section{introduccion}
The Last-Success-Problem is the problem of maximizing the probability of
stopping on the last success in a finite sequence of Bernoulli trials. The
framework is as follows. There are $n$ Bernoulli random variables which are
observed sequentially. The problem is to find a stopping rule to maximize
the probability of stopping at the last "1". We restrict ourselves here to
the case in which the random variables are independent. This problem has
been studied by Hill and Krengel \cite{1992}, Hsiau and Yang \cite{2000} and
was simply and elegantly solved by F.T. Bruss in \cite{BR1} with the
following famous result.  Other recent papers related to this problem are \cite{fer16}, \cite{mono}, \cite{juego} and \cite{extension}.
\begin{teor}
(Odds-Theorem, F.T. Bruss 2000). Let $I_{1},I_{2},...,I_{n}$ be n
independent Bernoulli random variables with known $n$. We denote by ($%
i=1,...,n$) $p_{i}$, the parameter of $I_{i}$; i.e. ($p_{i}=P(I_{i}=1)$).
Let $q_{i}=1-p_{i}$ and $r_{i}=p_{i}/q_{i}$. We define the index
\begin{equation}
\mathbf{s}=
\begin{cases}
\max\{1\leq k\leq n: \sum_{j=k}^n r_j \geq 1\}, & \text{if $\sum_{i=1}^n
r_i\geq 1$}; \\
1, & \text{ otherwise.}%
\end{cases}%
\end{equation}
To maximize the probability of stopping on the last $"1"$ of the sequence,
it is optimal to stop on the first $"1"$ we encounter among the variables $%
I_{\mathbf{s}},I_{\mathbf{s}+1},...,I_{n}$.
The optimal win probability is given by
\begin{equation}
\mathcal{V}(p_1,...,p_n):=
\begin{cases}
 {\ \left( {\displaystyle{ \prod_{j=\mathbf{s} }^{n}q_j}} \right) } {%
\left( \displaystyle{\sum_{i=\mathbf{s} }^{n} r_i} \right)} , & \text{if $p_\mathbf{s}<1$};%
\vspace{2ex} \\
  {\ \displaystyle{\prod_{j=\mathbf{s+1} }^{n}q_j}}, & \text{if $p_\mathbf{s}=1.$
}%
\end{cases}%
\end{equation}
\end{teor}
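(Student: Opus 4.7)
The plan is to introduce a value function and solve the corresponding Bellman recursion by descending induction. Define
\begin{equation*}
V_k \;:=\; \sup_{\tau} \mathbb{P}\bigl(I_\tau=1,\; I_{\tau+1}=\cdots=I_n=0\bigr),
\end{equation*}
the supremum ranging over stopping times $\tau$ with values in $\{k,\ldots,n\}$. Writing $Q_{k+1}:=\prod_{j=k+1}^{n}q_j$ for the probability that $I_{k+1}=\cdots=I_n=0$, a one-step analysis gives
\begin{equation*}
V_k \;=\; q_k\,V_{k+1} \;+\; p_k\,\max\{Q_{k+1},\,V_{k+1}\},\qquad V_{n+1}=0,
\end{equation*}
because if $I_k=0$ one must continue, while if $I_k=1$ one chooses between stopping (winning with probability $Q_{k+1}$) and continuing (worth $V_{k+1}$). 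The optimal action on a $1$ at time $k$ is thus to stop iff $Q_{k+1}\ge V_{k+1}$, and the answer to the problem is $V_1$.

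A short preliminary computation identifies the candidate value. For the threshold strategy ``stop at the first $1$ encountered in $I_k,\ldots,I_n$'', summing over the position $i$ of that success yields the closed form
\begin{equation*}
W_k \;=\; \sum_{i=k}^{n} p_i \prod_{\substack{j=k\\ j\neq i}}^{n}\! q_j \;=\; \Bigl(\prod_{j=k}^{n}q_j\Bigr)\sum_{i=k}^{n} r_i .
\end{equation*}
I would then prove, by descending induction with base case $V_n=p_n=W_n$, that $V_k=W_k$ for every $k\ge\mathbf{s}$. The inductive step is where the definition of $\mathbf{s}$ enters: for $k\ge\mathbf{s}$ the tail sum $\sum_{j=k+1}^{n}r_j$ is $<1$, which rewrites as $W_{k+1}\le Q_{k+1}$, so the $\max$ in the recursion returns $Q_{k+1}$ and an algebraic manipulation (using $q_kQ_{k+1}=Q_k$ and $p_k=q_kr_k$) collapses it to $V_k=q_kW_{k+1}+p_kQ_{k+1}=W_k$.

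For $k<\mathbf{s}$ I would show instead that continuing is always optimal, i.e.\ $V_{k+1}\ge Q_{k+1}$, so that the $\max$ returns $V_{k+1}$ and $V_k=V_{k+1}$. The seed is $V_\mathbf{s}=W_\mathbf{s}=Q_\mathbf{s}\sum_{j=\mathbf{s}}^{n}r_j\ge Q_\mathbf{s}\ge Q_{k+1}$ (valid because $\mathbf{s}\ge k+1$), and the inequality propagates downward. Combining the two regimes gives $V_1=V_\mathbf{s}=W_\mathbf{s}=\mathcal V(p_1,\ldots,p_n)$, with the optimal rule being precisely the one described in the theorem. The main obstacle is simply the bookkeeping of this phase transition --- verifying that the comparison $V_{k+1}\gtrless Q_{k+1}$ flips sign exactly at $k=\mathbf{s}$ --- together with the degenerate branch $\mathbf{s}=1$ (case $\sum_i r_i<1$), in which the threshold is just placed at the very first step and the same analysis applies verbatim.
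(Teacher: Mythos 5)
Your proof is correct, but there is nothing in the paper to compare it against: the statement is Theorem~1 of the paper, which is quoted as Bruss's Odds-Theorem and attributed to \cite{BR1} without proof --- the author uses it as a black box throughout. Your argument is the standard backward-induction (Bellman) proof of the odds theorem, and all the pieces check out: the recursion $V_k=q_kV_{k+1}+p_k\max\{Q_{k+1},V_{k+1}\}$ is the right one (stopping on a $0$ is never useful, so only the decision on a $1$ matters), the identity $q_kW_{k+1}+p_kQ_{k+1}=W_k$ follows from $q_kQ_{k+1}=Q_k$ and $p_k=r_kq_k$, the comparison $W_{k+1}\lessgtr Q_{k+1}$ is exactly the comparison of $\sum_{j=k+1}^{n}r_j$ with $1$, and the monotonicity $Q_{\mathbf{s}}\ge Q_{k+1}$ for $k+1\le\mathbf{s}$ makes the ``continue'' regime propagate down to $k=1$. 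This is essentially Bruss's own route in \cite{BR1} (identifying the one-stage look-ahead rule as a monotone, hence optimal, threshold rule and evaluating the threshold rule's win probability as the probability of exactly one success in the window), so you have not found a genuinely different proof, just a clean self-contained rendering of the classical one. Two small points worth flagging if you write it up: the recursion tacitly uses independence to justify that the continuation value depends only on $k$ and not on the observed history (say so explicitly), and the case $p_i=1$ for some $i\ge\mathbf{s}$ makes $r_i=\infty$ and $Q_{k}=0$, so the formula $W_k=Q_k\sum r_i$ needs the convention $0\cdot\infty$ resolved by the first (sum-over-$i$) expression for $W_k$ --- a case the paper itself cares about, since its Proposition~\ref{inf} treats $R_{\mathbf{s}}=\infty$.
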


Henceforth, we will denote by $\mathcal{G}(p_{1},...,p_{n})$ the game
consisting of pointing to the last 1 of the sequence $\{I_{1},...,I_{n}\}$,
where $0<p_{i}=P(I_{i}=1)$ for all $i=1,...,n$. We denote $%
R_{k}:=\sum_{i=k}^{n}\frac{p_{i}}{1-p_{i}}$ and $Q_{k}:=%
\prod_{i=k}^{n}(1-p_{i})$. The index $\mathbf{s}$ in Theorem 1 will be
called the \emph{optimal threshold} and the probability of winning, using
the optimal strategy, will be denoted by $\mathcal{V}(p_{1},...,p_{n})$.

Bruss also presented in \cite{BR1} the following bounds for the probability
of winning.

\begin{teor}
Let $\mathbf{s}$ be the optimal threshold for the game $\mathcal{G}%
(p_1,...,p_n)$, then
\begin{equation*}
    \mathcal{V}(p_1,...,p_n)> R_\mathbf{s} e^{-R_\mathbf{s}}.
\end{equation*}
\end{teor}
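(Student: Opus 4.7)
The plan is to reduce the inequality to a term-by-term elementary estimate. By the Odds-Theorem we already know that
\[
\mathcal{V}(p_1,\ldots,p_n) \;=\; Q_{\mathbf{s}}\,R_{\mathbf{s}} \;=\; \Bigl(\prod_{j=\mathbf{s}}^{n}(1-p_j)\Bigr)\Bigl(\sum_{i=\mathbf{s}}^{n}\tfrac{p_i}{1-p_i}\Bigr),
\]
so the inequality $\mathcal{V}>R_{\mathbf{s}}e^{-R_{\mathbf{s}}}$ is equivalent to the factor-wise statement $Q_{\mathbf{s}}>e^{-R_{\mathbf{s}}}$; the multiplicative term $R_{\mathbf{s}}$ cancels and is positive since all $p_i>0$. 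Thus the whole argument collapses to the task of comparing $\prod_{j=\mathbf{s}}^{n}(1-p_j)$ with $\exp\bigl(-\sum_{j=\mathbf{s}}^{n} p_j/(1-p_j)\bigr)$.

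To do this I would first prove the single-variable inequality
\[
1-p \;>\; \exp\!\Bigl(-\tfrac{p}{1-p}\Bigr)\qquad\text{for every }p\in(0,1).
\]
The cleanest route is the substitution $u:=p/(1-p)\in(0,\infty)$, which turns the claim into $1/(1+u)>e^{-u}$, i.e.\ $e^{u}>1+u$ for $u>0$. This is the standard strict exponential inequality, obtainable either from the Taylor series of $e^u$ or from the observation that $g(u):=e^{u}-(1+u)$ satisfies $g(0)=0$ and $g'(u)=e^u-1>0$ for $u>0$. Equivalently one may directly check that $f(p):=\log(1-p)+p/(1-p)$ has $f(0)=0$ and $f'(p)=p/(1-p)^2>0$ on $(0,1)$.

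Then I take the product of the one-variable inequality over the indices $j=\mathbf{s},\mathbf{s}+1,\ldots,n$:
\[
Q_{\mathbf{s}} \;=\; \prod_{j=\mathbf{s}}^{n}(1-p_j) \;>\; \prod_{j=\mathbf{s}}^{n}\exp\!\Bigl(-\tfrac{p_j}{1-p_j}\Bigr) \;=\; \exp(-R_{\mathbf{s}}).
\]
Strict inequality is preserved because every factor is positive and at least one of the $p_j$ lies in $(0,1)$, so at least one factor inequality is strict. Multiplying through by $R_{\mathbf{s}}>0$ and invoking the Odds-Theorem expression for $\mathcal{V}$ yields the claim.

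I do not anticipate a real obstacle: the bound is essentially the fact that each individual $(1-p_j)$ is bounded below by a tighter exponential than $e^{-p_j}$, namely $e^{-p_j/(1-p_j)}$, and this slack is exactly what one needs after factoring out $R_{\mathbf{s}}$. The only point that deserves care is to verify the strictness of the inequality and to note that the argument uses nothing about the specific value of $\mathbf{s}$ beyond $R_{\mathbf{s}}>0$, so the bound actually holds with $\mathbf{s}$ replaced by any starting index $k$ with $R_k>0$.
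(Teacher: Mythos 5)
Your argument is correct, and it is worth noting that the paper itself gives no proof of this statement: it is quoted as Bruss's theorem from \cite{BR1}, and Bruss's original proof is essentially the one you found, namely the factor-wise bound $q_j=\frac{1}{1+r_j}> e^{-r_j}$ (equivalently your $1-p>e^{-p/(1-p)}$, i.e.\ $e^{u}>1+u$), multiplied over $j=\mathbf{s},\dots,n$ and then by $R_{\mathbf{s}}$. The one caveat is the degenerate case in which some $p_i=1$ for $i\geq\mathbf{s}$: then $R_{\mathbf{s}}=\infty$, the product formula for $\mathcal{V}$ becomes an indeterminate $0\cdot\infty$, and your step of ``cancelling $R_{\mathbf{s}}$'' is not literally available. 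In that case the claimed inequality is trivial (the right-hand side is $\lim_{x\to\infty}xe^{-x}=0$ while the win probability $\prod_{i=\mathbf{s}+1}^{n}(1-p_i)$ is positive, cf.\ Proposition \ref{inf} of the paper), so you should simply dispose of it separately before assuming $p_j<1$ for all $j\geq\mathbf{s}$; with that remark added your proof is complete.
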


He subsequently presented an addendum  \cite{BR2} with the following result
for the case in which $R_{1}\geq 1$.
\begin{teor} \label{bruscota} If $  R_1\geq 1$ then
\begin{equation}\label{cota} \mathcal{V}(p_1,...,p_n)>\frac{1}{e}.
\end{equation}
\end{teor}

Very recently this result has been improved in \cite{otro} as follows.

\begin{teor} \label{ot} If $  R_1\geq 1$ then
$$ \mathcal{V}(p_1,...,p_n)\geq \left(1-\frac{1}{n+1}\right)^n>\frac{1}{e}.$$
\end{teor}

  This bound is, in fact, the same one that is proposed as Exercise      18 in \cite{fer} (Chapter 5,
Section 5) :$$ \mathcal{V}(p_1,...,p_n)\geq \left(1+\frac{1}{n}\right)^{-n}>\frac{1}{e}.$$ 
 However, Theorem 4 does not improve (\ref{cota}) significantly for large $n$ .

In the present paper, sharper  lower bounds are established for the
probability of winning than those presented above. In passing, we provide a
very different proof of Theorem \ref{bruscota} from that of Bruss.

In those cases where $ p_ {i} <1 $ for all $ i $, if all the random variables are zero, then the player fails. This suggests a variant (Variant I) of the standard game in which this is not considered a failure and the game is repeated as many times as necessary until a 1 appears. We study this variant in Section 3, where we will see that the typical value of $1/e$ for the
lower bound of the probability of winning is replaced by $\frac{1}{e-1}%
=0.5819...$.

We also consider the possibility that the player can choose between participating
in the game in its standard version or predict that all the random variables
will have a value of 0. The study of this variant (Variant II) is very
straightforward, but it is pleasing to discover that $1/e$ is the lower bound
for the probability of winning in all cases.

The final section summarizes the results obtained with respect to the lower
bounds for the probability of winning and establishes that the game with the
greatest probability of winning is Variant I.

\section{Lower bound for the case in which $R_{\mathbf{s}}=\infty $}

\begin{teor}
\label{inf}If $\mathbf{s}$ is the optimal threshold and $R_\mathbf{s}=\infty$, then
\begin{equation*}
\mathcal{V}(p_1,...,p_n)\geq {\left( \frac{  n - \mathbf{s}} {  n - \mathbf{s}+ R_{\mathbf{s}+1} }
\right) }^ {  n - \mathbf{s}}>\frac{1}{e^{R_{\mathbf{s}+1}}}>\frac{1}{e}.
\end{equation*}
\end{teor}

\begin{proof}
If $\mathbf{s}$ is the optimal threshold and $R_{\mathbf{s}}=\infty $, this means that $p_{\mathbf{s}}=1
$ and $R_{\mathbf{s}+1}<1.$ In this case, the probability of winning is
\begin{equation*}
\prod_{i=\mathbf{s}+1}^{n}(1-p_{i}).
\end{equation*}%
Minimizing $\prod_{i=\mathbf{s}+1}^{n}(1-x_{i})$ with respect to $x_{i}$ subject to
the constraint
\begin{equation*}
\sum_{i=\mathbf{s}+1}^{n}x_{i}/(1-x_{i})=R_{\mathbf{s}+1}
\end{equation*}%
shows (using Lagrange multiplier technique) that this minimum is obtained by
\begin{equation*}
 {x_{\mathbf{s}+1}}=....=x_{n}=\frac{R_{\mathbf{s}+1}}{R_{\mathbf{s}+1}+n-\mathbf{s} }
\end{equation*}
and its value
\begin{equation*}
{\left( \frac{n-\mathbf{s}}{n-\mathbf{s}+R_{\mathbf{s}+1}}\right) }^{n-\mathbf{s}}.
\end{equation*}
This is decreasing with $n$ always above its limit, which is $e^{-R_{\mathbf{s}+1}}>e^{-1}$.
\end{proof}

\section{Lower bound for the case in which $1\leq R_{\mathbf{s}}\leq\infty $}

The proof presented here  is very different from the Bruss's proof  and is based on the construction of a
problem with a lower probability of winning (always $>1/e$), adding a
sufficiently large number of Bernoulli random variables with
the same parameter. Previously, however, let us see several preparatory
lemmata.

\begin{lem}
\label{menor1} If $p\in (0,1)$ and $\mathbf{x}\geq 1$, then
\begin{equation*}
(1-p)\frac{\frac{p}{1-p}+\mathbf{x}}{\mathbf{x}}\leq 1.
\end{equation*}
\end{lem}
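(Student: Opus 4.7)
The plan is to observe that this is just an algebraic identity in disguise, and the hypothesis $X \geq 1$ forces a certain term to be non-positive. I would not invoke any machinery; a direct manipulation suffices.

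First I would clear the fraction in the numerator. Multiplying $(1-p)$ through, the left-hand side becomes
\begin{equation*}
(1-p)\,\frac{\frac{p}{1-p}+X}{X} \;=\; \frac{p + X(1-p)}{X} \;=\; \frac{X + p(1-X)}{X} \;=\; 1 + \frac{p(1-X)}{X}.
\end{equation*}
Once the expression is in this form the inequality is transparent: $p > 0$ and $X > 0$, while $1 - X \leq 0$ by hypothesis, so the additive correction $p(1-X)/X$ is non-positive and the whole expression is at most $1$.

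There is really no obstacle here; the only tiny thing to watch is that the denominators $1-p$ and $X$ are both strictly positive, which is guaranteed by $p\in(0,1)$ and $X\geq 1$, so no division-by-zero issue arises and the rearrangement is legitimate. Equality holds precisely when $X=1$, which matches the heuristic that this lemma will be applied in the tight regime $R_{s}\geq 1$ later in the paper.
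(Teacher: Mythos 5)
Your computation is correct: clearing the fraction gives $1+\frac{p(1-X)}{X}$, and $p>0$, $X\geq 1$ make the correction term non-positive, which is exactly the direct verification the paper intends (its proof is simply the word ``Straightforward''). Nothing further is needed.
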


\begin{proof}
 {It should be borne in mind that $$(1-p)\frac{\frac{p}{1-p}+\mathbf{x}}{\mathbf{x}}=p \left(\frac{1}{\mathbf{x}}-1\right)+1.$$}
\end{proof}

\begin{lem}
\label{menor2} If $\{p,P\}\subset (0,1)$ with $p\leq P$ and $%
1\leq \mathbf{x}\leq \frac{1}{1-   {P}}$, then
\begin{equation*}
\frac{(1-p)}{1-P}\frac{\frac{p}{1-p}+\mathbf{x}-\frac{P}{1- P}}{\mathbf{x}}\leq 1.
\end{equation*}
\end{lem}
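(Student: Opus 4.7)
The plan is to clear denominators and reduce the stated inequality to an equivalent linear inequality in $X$, and then recognise that this reduced form is exactly the hypothesis $X\leq 1/(1-\mathcal{P})$. The structural parallel with Lemma~\ref{menor1} suggests the same algebraic strategy: expand the numerator, collect the $X$ terms, and compare the two sides.

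Explicitly, since $X>0$ and $1-\mathcal{P}>0$, multiplying both sides of the claim by $(1-\mathcal{P})X$ preserves the inequality and turns it into
$$(1-p)\left(\frac{p}{1-p}+X-\frac{\mathcal{P}}{1-\mathcal{P}}\right) \leq (1-\mathcal{P})X.$$
I would then expand the left-hand side. The products $(1-p)\cdot\frac{p}{1-p}=p$ and $-(1-p)\cdot\frac{\mathcal{P}}{1-\mathcal{P}}$ combine via the identity $p(1-\mathcal{P})-(1-p)\mathcal{P}=p-\mathcal{P}$ into the single constant term $\frac{p-\mathcal{P}}{1-\mathcal{P}}$, so the inequality becomes
$$\frac{p-\mathcal{P}}{1-\mathcal{P}} + (1-p)X \leq (1-\mathcal{P})X,$$
which simplifies, after moving the $(1-p)X$ across, to
$$\frac{p-\mathcal{P}}{1-\mathcal{P}} \leq \bigl((1-\mathcal{P})-(1-p)\bigr)X = (p-\mathcal{P})X.$$

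Since $p<\mathcal{P}$, the factor $p-\mathcal{P}$ is strictly negative, so dividing through by $p-\mathcal{P}$ reverses the inequality to $\frac{1}{1-\mathcal{P}} \geq X$, which is precisely the upper bound imposed in the hypothesis. Thus the claim is \emph{equivalent} to $X\leq 1/(1-\mathcal{P})$ under the assumptions $p<\mathcal{P}$ and $X>0$, and the lemma follows.

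There is no real obstacle here: it is a one-step algebraic manipulation. The only subtlety worth flagging is the sign flip in the final division, where the hypothesis $p<\mathcal{P}$ is indispensable; the role of the hypothesis $X\geq 1$ in this lemma is merely to ensure $X>0$, which legitimises the initial clearing of denominators without any sign change.
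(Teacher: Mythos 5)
Your proof is correct and follows essentially the same route as the paper: both reduce the claim by elementary algebra to the inequality $(p-\mathcal{P})\left(\frac{1}{1-\mathcal{P}}-X\right)\leq 0$ and conclude from $p<\mathcal{P}$ and $X\leq\frac{1}{1-\mathcal{P}}$. Your closing observation that only $X>0$ (not $X\geq 1$) is needed is accurate.
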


\begin{proof}  {
$$\frac{(1-p)}{1-P}\frac{\frac{p}{1-p}+\mathbf{x}-\frac{P}{1- P}}{\mathbf{x}}-1= \frac{(p-P) ((P-1) \mathbf{x}+1)}{(P-1)^2 \mathbf{x}} $$
 $$  \frac{(1-p)}{1-P}\frac{\frac{p}{1-p}+\mathbf{x}-\frac{P}{1- P}}{\mathbf{x}}\leq 1 \Longleftrightarrow  \frac{(p-P) ((P-1) \mathbf{x}+1)}{(P-1)^2 \mathbf{x}}\leq 0.$$} Now, the last inequality is true,
 seeing as $$ p\leq P \textrm{ and } 1\leq \mathbf{x}\leq \frac{1}{1-P}.$$
\end{proof}

 {
\begin{lem}\label{infinito}
  If $\{p,\mathbf{x}\}\subset (0,1)$, then $$(1-p)\left( \frac{p}{1-p}+\mathbf{x}\right)\leq 1.$$
\end{lem}
\begin{proof} $$(1-p)\left( \frac{p}{1-p}+\mathbf{x}\right)=p+\mathbf{x}-p \mathbf{x}.$$
If we now assume that $p+\mathbf{x}-p \mathbf{x}=1+\epsilon>1$, then we have the following contradiction
$$p=\frac{1+\epsilon-\mathbf{x}}{1-\mathbf{x}}>1.$$
\end{proof}}
   {
We will denote by  $ \lceil  \alpha \rceil$ the least integer greater than or equal to $\alpha$.}

\begin{lem} \label{bueno}   {  Let $\mathfrak{f}:[0,1] \longrightarrow
\mathbb{R}$ be the following function}
  {
$$
\mathfrak{f}(x):=
\begin{cases}
x\cdot(\lceil 1/x\rceil-1)\cdot(1-x)^{\lceil 1/x\rceil-2}, & \text{if
$0<x<1$}; \\
1/e, &  \text{if $x=0$};\\ 1, &  \text{if $x=1$. }
\end{cases}%
$$}
  {Then, we have that $ \mathfrak {f} $ is continuous and strictly
increasing in $ [0,1] $ and, consequently, that $ 1 / e <\mathfrak {f}
(x) $ for all $ x \in (0,1] $.}
\end{lem}
\begin{proof}   {The function $ \mathfrak {f} $ can be rewritten as follows}

  {
$$
\mathfrak{f}(x)=
\begin{cases}
x\cdot n\cdot(1-x)^{n-1}, & \text{if $ \frac{1}{n+1}\leq x<\frac{1}{n}$
and $n \in \mathbb{N}$}; \\
1/e, &  \text{if $x=0$}; \\ 1, &  \text{if $x=1$. }
\end{cases}%
$$}

  {
so it is clear that the function $ \mathfrak{f}\in
C^1\left(\frac{1}{n+1}, \frac{1}{n}\right)$. }
  {
Since $\mathfrak{f}(x)=x$ for all $x \in [1/2,1)$, it follows that
$$ \lim_{x\rightarrow 1^{-}}\mathfrak{f} \left( x\right)=1.$$}
  {Moreover,
$$\lim_{x\rightarrow \frac{1}{n}^{-}}\mathfrak{f} \left(
x\right)=\left(1-\frac{1}{n}\right)^{n-1}=\frac{1}{n} (n-1)
\left(1-\frac{1}{n}\right)^{n-2}=\lim_{x\rightarrow
\frac{1}{n}^{+}}\mathfrak{f} ( x )$$
so it follows that $ \mathfrak{f} $ is also continuous at $ \frac{1}{n}
$ for all $ n \in \mathbb{N} $.}
  {
On the other hand, for every $x\in \left( \frac{1}{n+1},
\frac{1}{n}\right)$ we have that $$\mathfrak{f}'(x)= -n (1-x)^{n-2} (n
x-1)>0.$$ Thus, $\mathfrak{f}$ is strictly increasing in $(0,1]$ and it
only remains to prove that $\mathfrak{f}$ is continuous at $0$.
To do so, note that}
  {$$x\in \left( \frac{1}{n+1}, \frac{1}{n}\right) \Longrightarrow
\left(\frac{n}{n+1}\right)^n=  \mathfrak{f}\left(\frac{1}{n+1}\right) <
\mathfrak{f}(x)<\mathfrak{f}\left(\frac{1}{n}\right)
=\left(1-\frac{1}{n}\right)^{n-1},  $$
so, taking limits:
$$\frac{1}{e} =\lim_{n\rightarrow\infty}
\left(\frac{n}{n+1}\right)^n\leq
\lim_{x\rightarrow0^+} \mathfrak{f}(x)\leq\lim_{n\rightarrow\infty}
\left(1-\frac{1}{n}\right)^{n-1}
=\frac{1}{e}\Longrightarrow\lim_{x\rightarrow0^+}
\mathfrak{f}(x)=\frac{1}{e}.$$
and the proof is complete.}
\end{proof}

\begin{lem}
\label{emenuevo}  { Let  the game $\mathcal{G}(p_{1},...,p_{n})$ have
 $$   {p_{i}=p<1}\textrm{  for } n-\lceil 1/p \rceil+2\leq i\leq n \textrm{ and } 1\leq n-\lceil 1/p \rceil+2.  $$
Hence, the optimal threshold is $\mathbf{s}=n-\lceil 1/p \rceil+2$ and the probability of winning is
\begin{equation*}
\mathcal{V}(p_{1},...,p_{n})=
p \cdot(\lceil 1/p\rceil-1)\cdot(1-p)^{\lceil 1/p\rceil-2}
 =  {\mathfrak{f}(p)>1/e.}
\end{equation*}}

\end{lem}
\begin{proof} {If $n-\lceil 1/p \rceil+2\geq 1$, then $\mathbf{s}=n-\lceil 1/p\rceil +2\geq 1$ is
the optimal threshold, given that
\begin{equation*}
R_{\mathbf{s}}=(n-\mathbf{s}+1)\frac{p}{1-p}=(\lceil 1/p\rceil -1)\frac{p}{1-p}\geq 1
\end{equation*}
\begin{equation*}
R_{\mathbf{s}+1}=(n-\mathbf{s})\frac{p}{1-p}=(\lceil 1/p\rceil -2)\frac{p}{1-p}<1
\end{equation*}
On the other hand,
$$Q_{\mathbf{s}}=(1-p)^{n-\mathbf{s}+1}=(1-p)^{\lceil 1/p \rceil-1}.$$
And  finally, using Lemma \ref{bueno}, we have

$$\mathcal{V}(p_{1},...,p_{n})=R_{\mathbf{s}}Q_{\mathbf{s}}=
p \cdot(\lceil 1/p\rceil-1)\cdot(1-p)^{\lceil 1/p\rceil-2} =  {\mathfrak{f}(p)>\frac{1}{e}.} $$

}
\end{proof}

 {In the particular case that $ p $ is the inverse of a natural number, we have the following corollary with a   very sharp bound.}

\begin{cor}
\label{eme} Let $1<m\in \mathbb{N}$ and the game $\mathcal{G}%
(p_{1},...,p_{n})$ with $n-m+2\geq 1$ and $p_{i}=1/m$ for $n-m+2\leq i\leq n$.
Hence, the optimal threshold is $s=n-m+2$ and the probability of winning is
\begin{equation*}
\mathcal{V}(p_{1},...,p_{n})={\left( \frac{-1+m}{m}\right) }^{-1+m}>1/e.
\end{equation*}
\end{cor}

\begin{lem}
\label{anadir} Let $\mathbf{s}$ be the optimal threshold and $\vartheta $
the probability of winning for the game $\mathcal{G}(p_{1},...,p_{n})$.  {Let $%
\mathbf{p}\leq\min \{p_{i}:i=\mathbf{s},...,n\}$}. Let us now consider the auxiliary
game $\mathcal{G}(p_{1},...,p_{n},p_{n+1})$, with $p_{n+1}=\mathbf{p}$, and let us denote by $\vartheta ^{\ast }$
the probability of winning for this game. Then:
\begin{equation*}
\vartheta \geq \vartheta ^{\ast }.
\end{equation*}
\end{lem}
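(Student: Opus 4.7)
The plan is to compare the two Odds-Theorem formulas for $\vartheta$ and $\vartheta^{\ast}$ after first locating the optimal threshold $\mathbf{s}^{\ast}$ of the augmented game relative to $\mathbf{s}$; the two preparatory lemmata are precisely tailored to the two possible positions of $\mathbf{s}^{\ast}$.

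The first step is to show $\mathbf{s}^{\ast}\in\{\mathbf{s},\mathbf{s}+1\}$. Since $R_{\mathbf{s}}+r_{n+1}\geq R_{\mathbf{s}}\geq 1$, we have $\mathbf{s}^{\ast}\geq\mathbf{s}$. Conversely, the hypothesis $p_{n+1}\leq \mathbf{p}\leq p_i$ for $i=\mathbf{s},\ldots,n$ implies $r_{n+1}\leq r_i$ throughout that range; in particular $r_{n+1}\leq r_{\mathbf{s}+1}$, so
\[ R_{\mathbf{s}+2}+r_{n+1}\leq R_{\mathbf{s}+2}+r_{\mathbf{s}+1}=R_{\mathbf{s}+1}<1, \]
which forces $\mathbf{s}^{\ast}\leq \mathbf{s}+1$.

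If $\mathbf{s}^{\ast}=\mathbf{s}$, the Odds-Theorem gives $\vartheta^{\ast}=(1-p_{n+1})Q_{\mathbf{s}}(R_{\mathbf{s}}+r_{n+1})$, and $\vartheta\geq \vartheta^{\ast}$ is exactly Lemma~\ref{menor1} applied with $p=p_{n+1}$ and $X=R_{\mathbf{s}}\geq 1$. If $\mathbf{s}^{\ast}=\mathbf{s}+1$, then $\vartheta^{\ast}=(1-p_{n+1})Q_{\mathbf{s}+1}(R_{\mathbf{s}+1}+r_{n+1})$ and $\vartheta=(1-p_{\mathbf{s}})Q_{\mathbf{s}+1}R_{\mathbf{s}}$, so after cancelling $Q_{\mathbf{s}+1}$ and substituting $R_{\mathbf{s}+1}=R_{\mathbf{s}}-p_{\mathbf{s}}/(1-p_{\mathbf{s}})$, the desired inequality takes the precise form of Lemma~\ref{menor2} with $p=p_{n+1}$, $\mathcal{P}=p_{\mathbf{s}}$, and $X=R_{\mathbf{s}}$.

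The only nontrivial bookkeeping is verifying the hypotheses of Lemma~\ref{menor2}: we need $p_{n+1}\leq p_{\mathbf{s}}$, which is immediate from $p_{n+1}\leq \mathbf{p}\leq p_{\mathbf{s}}$, and $1\leq R_{\mathbf{s}}\leq 1/(1-p_{\mathbf{s}})$. The lower bound is definitional, while the upper bound follows from $R_{\mathbf{s}}=p_{\mathbf{s}}/(1-p_{\mathbf{s}})+R_{\mathbf{s}+1}<1/(1-p_{\mathbf{s}})$, using $R_{\mathbf{s}+1}<1$ by maximality of $\mathbf{s}$. The main potential obstacle is the case analysis itself; once one realizes the two lemmata correspond exactly to the two admissible positions of $\mathbf{s}^{\ast}$, the remainder reduces to direct substitution.
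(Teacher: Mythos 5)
Your proof is correct and takes essentially the same route as the paper's: reduce to $\mathbf{s}^{\ast}\in\{\mathbf{s},\mathbf{s}+1\}$ and then apply Lemma~\ref{menor1} (resp.\ Lemma~\ref{menor2}) with $X=R_{\mathbf{s}}$, checking $R_{\mathbf{s}}<1/(1-p_{\mathbf{s}})$ via $R_{\mathbf{s}+1}<1$. The only difference is that you explicitly justify $\mathbf{s}^{\ast}\in\{\mathbf{s},\mathbf{s}+1\}$, which the paper dismisses as evident.
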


\begin{proof}%

We denote by $V(t)$ the probability of winning when $t$  {is the  threshold used}  and by $V^{\ast }(t)$ the same probability for the auxiliary problem.   {Let us denominate by $\mathbf{s}^{\ast }$ the optimal threshold for the
problem $\mathcal{G}(p_{1},...,p_{n},p_{n+1})$.}

  {
We will assume, first, that $R_1\geq1$.}

 Given that $\mathbf{s}$ is the optimal threshold for the problem $\mathcal{G}%
(p_{1},...,p_{n})$, we have that:

$\bullet$ If $p_\mathbf{s}<1:$

\begin{equation*}
\mathbf{s}=\max\{k: \sum_{j=k }^n r_j\geq 1\} \text{ and } V(\mathbf{s}%
):=\left(\prod_{j=\mathbf{s} }^{n} q_j \right) \left(\sum_{i=\mathbf{s}
}^{n} r_i \right)=\vartheta.
\end{equation*}
Let us denominate by $\mathbf{s}^{\ast }$ the optimal threshold for the
problem $\mathcal{G}(p_{1},...,p_{n},p_{n+1})$. Thus,
\begin{equation*}
\mathbf{s^{\ast }}=\max \{k:r_{n+1}+\sum_{j=k}^{n}r_{j}\geq 1\}\text{ and }%
V^{\ast }(\mathbf{s}^{\ast }):=\left( q_{n+1}\prod_{j=\mathbf{s^{\ast }}%
}^{n}q_{j}\right) \left( r_{n+1}+\sum_{i=\mathbf{s^{\ast }}}^{n}r_{i}\right)
=\vartheta ^{\ast }.
\end{equation*}
 {
We will see what  $\mathbf{s}^{\ast }\in \{\mathbf{s},\mathbf{s} +1\}$. It's clear that $\mathbf{s}\leq \mathbf{s}^*$. Let us assume $\mathbf{s}+k=\mathbf{s}^*>\mathbf{s}+1$. Now, considering   that $r_{n+1}=\frac{\mathbf{p}}{1-\mathbf{p}}\leq r_i$, for all $i=\mathbf{s},...,n$, it  concluded that
$$ \sum_{i=\mathbf{s}+1}^nr_i\geq\sum_{i=\mathbf{s}+2}^nr_i+ \frac{\mathbf{p}}{1-\mathbf{p}} \geq \sum_{i=\mathbf{s}+k}^nr_i+ \frac{\mathbf{p}}{1-\mathbf{p}} \geq1$$
 which contradicts the fact that $\mathbf{s}$ is optimal. Thus,   $\mathbf{s}^{\ast }\in \{\mathbf{s},\mathbf{s} +1\}$, and  it suffices to prove that}
\begin{equation*}
V^{\ast }(\mathbf{s})\leq V(\mathbf{s})\text{ and }V^{\ast }(\mathbf{s}%
+1)\leq V(\mathbf{s}).
\end{equation*}
Now, making $\mathbf{x}:=\sum_{i=\mathbf{s}}^{n}r_{i}\geq1$, from Lemma \ref{menor1}, we
have that
\begin{equation*}
\frac{V^*(\mathbf{s})}{V(\mathbf{s})}=q_{n+1}\cdot\frac{r_{n+1}+ \mathbf{x}}{\mathbf{x}}\leq 1.
\end{equation*}
On the other hand, since $ \mathbf{x}-r_\mathbf{s} <1 $ and therefore $ \mathbf{x}<\frac{1}{1-p_\mathbf{s}}$, from Lemma \ref{menor2}, we have that

\begin{equation*}
\frac{V^*(\mathbf{s}+1)}{V(\mathbf{s})}=\frac{q_{n+1}}{q_\mathbf{s}}\cdot%
\frac{r_{n+1}+ \mathbf{x}- r_\mathbf{s}}{\mathbf{x}}\leq 1.
\end{equation*}
 {
$\bullet$ If $p_\mathbf{s}=1:$
\begin{equation*}
  \sum_{j=\mathbf{s}+1 }^n r_j< 1  \text{ and } V(\mathbf{s}%
):= \prod_{j=\mathbf{s}+1 }^{n} q_j   =\vartheta.
\end{equation*}
Thus, reasoning as above, we have that $\mathbf{s}^*\in \{\mathbf{s},\mathbf{s}+1\}$ and then we  prove that \begin{equation*}
V^{\ast }(\mathbf{s})\leq V(\mathbf{s})\text{ and }V^{\ast }(\mathbf{s}%
+1)\leq V(\mathbf{s}).
\end{equation*}
$$V^*(\mathbf{s})=q_{n+1}\left(\prod_{j=\mathbf{s} +1 }^{n} q_j \right)=(1-p)\left(\prod_{j=\mathbf{s}+1  }^{n} q_j \right)=(1-p)V(\mathbf{s})<V(\mathbf{s})$$
$$V^*(\mathbf{s}+1)=\left( q_{n+1}\prod_{j=\mathbf{s}+1}^{n}q_{j}\right) \left( r_{n+1}+\sum_{i=\mathbf{s}+1}^{n}r_{i}\right)={(1-p)V(\mathbf{s})\left( \frac{p}{1-p}+\sum_{i=\mathbf{s}+1}^{n}r_{i}\right)}$$
Now, making $\mathbf{x}:=\sum_{i=\mathbf{s}+1}^{n}r_{i}<1$, from Lemma \ref{infinito}, we
have that $V(\mathbf{s})\geq V^*(\mathbf{s}+1)$.}

  {Finally, if $ R_1 <1 $, then $ \mathbf {s} = 1 $ and, reasoning as in the case $ p_s <1 $, we also have that $ \vartheta \geq \vartheta^{\ast} $. }

\end{proof}

\begin{teor}  \label{mas1}Let us consider the game $\mathcal{G}(p_{1},...,p_{n})$ and let $%
\mathbf{s}$ be the optimal threshold with {$1\leq R_{\mathbf{s}}\leq\infty $.  Let $p:=\min\{p_{i}:\mathbf{s}\leq i\leq n\}$, then
\begin{equation*}
\mathcal{V}(p_{1},...,p_{n}) >1/e.
\end{equation*}}
\end{teor}
\begin{proof}
Using Lemma \ref{anadir} repeatedly, we can build a sequence of games with a
non-increasing probability of winning by attaching successive independent
Bernoulli random variables with parameter $ p $.
When the attachment process has been carried out as many times as is
necessary,  we shall be able to use Lemma \ref{emenuevo} and shall have
\begin{equation*}
\mathcal{V}(p_{1},...,p_{n})\geq \mathcal{V}(p_{1},...,p_{n},p,...,p)\geq
p \cdot(\lceil 1/p\rceil-1)\cdot(1-p)^{\lceil 1/p\rceil-2}=  {\mathfrak{f}(p)>\frac{1}{e}}.
\end{equation*}
\end{proof}
This result improves the lower bound, $1/e$, quite ostensibly when all the
parameters $p_{i}$ are moderately far from 0.
\subsection{ANNEX: The case $p_{i}=p$ for all $i$}
We end this section with a result   for the
case in which all the Bernoulli random variables have the same parameter $p$. This was
treated, with a certain degree of imprecision, in \cite{mal}.
\begin{prop}
Let us consider the game $\mathcal{G}(p_{1},...,p_{n})$, with $p_{i}=p<1$.
Thus,

$\bullet $ If $n\geq (\lceil 1/p\rceil -1)$, then $\mathbf{s}=n-\lceil 1/p\rceil +2$
is the optimal threshold and
\begin{equation*}
\mathcal{V}(p,...,p)= p \cdot(\lceil 1/p\rceil-1)\cdot(1-p)^{\lceil 1/p\rceil-2}.
\end{equation*}
$\bullet $ If $n<(\lceil 1/p\rceil -1)$, then $\mathbf{s}=1$ is the optimal threshold
and
\begin{equation*}
\mathcal{V}(p,...,p)=n\cdot p \cdot(1-p)^{n-1}.
\end{equation*}
\end{prop}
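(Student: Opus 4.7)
The proof is essentially a direct unpacking of the Odds-Theorem under the simplification $p_i=p$. My plan is to compute $R_k$ explicitly, solve the defining inequality for the optimal threshold, and then substitute into Bruss's product formula.

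First, since $p_i=p$ is constant, one has $r_i = p/(1-p)$ and hence
\begin{equation*}
R_k = \sum_{i=k}^n \frac{p}{1-p} = (n-k+1)\,\frac{p}{1-p}.
\end{equation*}
By Theorem~1, the optimal threshold (when $R_1 \geq 1$) is the largest integer $k$ with $R_k \geq 1$. The inequality $R_k \geq 1$ is equivalent to $n-k+1 \geq (1-p)/p = 1/p -1$, i.e., $k \leq n+2 - 1/p$. So the largest admissible integer $k$ is $\lfloor n+2 - 1/p\rfloor = n+2-\lceil 1/p\rceil$ (the identity $\lfloor -x\rfloor = -\lceil x\rceil$ holds for every real $x$).

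The case split now falls out mechanically. If $n+2-\lceil 1/p\rceil \geq 1$, i.e.\ $n \geq \lceil 1/p\rceil - 1$, then this is a valid index and $s=n+2-\lceil 1/p\rceil$. Otherwise $R_1 < 1$ and the theorem dictates $s=1$. One must check in the second case that $R_1 < 1$ actually holds: since $n < \lceil 1/p\rceil - 1 \leq 1/p$, we get $R_1 = n\,p/(1-p) < 1$ iff $np < 1-p$ iff $n < 1/p - 1$, which is implied by $n \leq \lceil 1/p\rceil - 2 < 1/p - 1$ when $1/p$ is not an integer, and holds trivially when $1/p \in \mathbb{N}$.

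Finally I substitute into
\begin{equation*}
\mathcal{V}(p,\ldots,p) = Q_s R_s = (1-p)^{n-s+1}(n-s+1)\,\frac{p}{1-p}.
\end{equation*}
In the first case $n-s+1 = \lceil 1/p\rceil - 1$, yielding
\begin{equation*}
\mathcal{V} = \frac{p}{1-p}\,(\lceil 1/p\rceil-1)\,(1-p)^{\lceil 1/p\rceil-1} = p\,(\lceil 1/p\rceil-1)\,(1-p)^{\lceil 1/p\rceil-2}.
\end{equation*}
In the second case $s=1$, $n-s+1=n$, and the formula collapses to $n\,p\,(1-p)^{n-1}$.

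The only mildly delicate step is the floor/ceiling manipulation together with the tie case $1/p \in \mathbb{N}$; everything else is a direct application of the Odds-Theorem, so no genuine obstacle is expected.
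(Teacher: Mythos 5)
Your proof is correct and follows essentially the same route as the paper: both are direct applications of the Odds-Theorem, computing $R_k=(n-k+1)p/(1-p)$ and substituting into $Q_sR_s$. The only cosmetic difference is that you derive $s$ by solving $R_k\geq 1$ and invoking $\lfloor n+2-1/p\rfloor=n+2-\lceil 1/p\rceil$, whereas the paper simply verifies the two defining inequalities $R_s\geq 1$ and $R_{s+1}<1$ for the claimed $s$.
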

\begin{proof}
 {$\bullet $  If $n\geq (\lceil 1/p\rceil -1)$, then  the conditions of the Lemma \ref{emenuevo} are met.}

 {
$\bullet $  If $n<(\lceil 1/p\rceil -1)$, then $R_{1}=n\frac{p}{1-p}<1$ and hence $\mathbf{s}=1$
is the optimal threshold and, moreover, $Q_{1}= (1-p)^n$. Thus
$$ \mathcal{V}(p,...,p)=R_1 Q_1=n\cdot p \cdot(1-p)^{n-1} .$$}
\end{proof}
\cite{mal}  addressed  the problem differently, concluding that $1/e$ is
a lower bound for the probability of winning. However, the
optimal threshold that is considered in the aforementioned paper
\begin{equation*}
s^{\ast }=\left\lfloor n+1+\frac{1}{\log (1-p)}+\frac{1}{2}\right\rfloor
\end{equation*}%
is not correct, as $s^{\ast }$ does not always coincide with $s=n-\lceil
1/p\rceil +2$, obtained in the previous proposition. Although it must be
said that it is a very good estimate.

\section{Lower bound for the case in which $1>R_{1}$}

\begin{teor}
\label{menos1}If $R_{1}<1$, then
\begin{equation*}
\mathcal{V}(p_1,...,p_n)>R_1\,{\left( \frac{n}{n + R_1} \right) }^ n> R_1
e^{-R_1}.
\end{equation*}
\end{teor}

\begin{proof}
\begin{equation*}
\mathcal{V}(p_1,...,p_n)=R_1Q_1 =\left(\sum_{i=1}^n \frac{p_i}{1-p_i}%
\right)\prod_{i= 1}^n (1-p_i)
\end{equation*}
Considering $f(x_{1},...,x_{n})=\sum_{i=1}^{n}\frac{x_{i}}{1-x_{i}}
\prod_{i=1}^{n}(1-x_{i})$ and using Lagrange multiplier technique we have that the minimum value of $f$ with the constraint $%
R_{1}=\sum_{i=1}^{n}\frac{x_{i}}{1-x_{i}}$ is reached in $x_{1}=...=x_{n}=%
\frac{R_{1}}{R_{1}+n}$ and the minimum value of $f$ is
\begin{equation*}
R_{1}\,{\left( \frac{n}{n+R_{1}}\right) }^{n}>R_{1}e^{-R_{1}}.
\end{equation*}
\end{proof}

\section{Variant I: If there have been no $1^{\prime }s$, the game is
repeated}

In those cases in which $p_{i}<1$ for all $i$ the player may fail because he
has no chance to point to any\emph{\ "last 1"}, as all the variables are 0.
This suggests a variant of the original game in which the game is repeated
as many times as necessary until a 1 appears. Of course, if $p_{i}=1$ for
some $i$, then it will never be necessary to repeat the game.

\begin{prop}
If $\mathbf{s}$ is the optimal threshold for the game $\mathcal{G}%
(p_{1},...,p_{n})$, with $p_{i}<1$ for all $i$, then the probability of
winning with the new rule is

\begin{equation} \label{repetir}
\mathcal{V}^*(p_1,...,p_n)=\frac{\left(\sum_{i=\mathbf{s}}^n\frac{p_i}{1-p_i}%
\right)\prod_{i=\mathbf{s}}^n (1-p_i)}{1-\prod_{i=1}^n (1-p_i)}.
\end{equation}
\end{prop}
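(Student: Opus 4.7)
The approach is to exploit the natural renewal structure of Variant~I: each round is an independent copy of the standard game, and the only modification is that the ``all zeros'' outcome triggers a restart rather than a loss. The plan is therefore to condition on the first round and solve a single linear equation for $\mathcal{V}^{*}$.

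First I would verify that the Bruss threshold $\mathbf{s}$ remains optimal in Variant~I. The key observation is that the event $A = \{I_1 = \cdots = I_n = 0\}$ has probability $Q_1 = \prod_{i=1}^n(1-p_i)$, which depends only on the parameters $p_i$ and not on the player's stopping rule. Consequently the stopping rule affects only the probability of correctly identifying the last $1$ given that some $1$ does appear, and by the Odds-Theorem this per-round winning probability is maximised by the same threshold $\mathbf{s}$, taking the value $\mathcal{V}(p_1,\ldots,p_n) = Q_{\mathbf{s}} R_{\mathbf{s}}$.

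Second, I would perform a one-step conditional-probability argument on the first round. Let $B$ denote the event ``the player wins in round one.'' Since $B$ requires at least one $I_i = 1$ while $A$ forbids it, the events $A$ and $B$ are disjoint, and on the complement $(A \cup B)^{c}$ at least one $1$ appeared but the player failed to identify the last one, so the game terminates in a loss. Using that successive rounds are i.i.d., the first-round decomposition reads
\[
\mathcal{V}^{*} \;=\; P(B) + P(A)\,\mathcal{V}^{*},
\]
which solves to $\mathcal{V}^{*} = P(B)/(1-P(A)) = \mathcal{V}(p_1,\ldots,p_n)/(1 - Q_1)$. Expanding $\mathcal{V}(p_1,\ldots,p_n) = Q_{\mathbf{s}} R_{\mathbf{s}}$ in the numerator then gives the displayed closed form.

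The computation is essentially immediate once the reduction is accepted, and I foresee no serious technical obstacle. The one subtlety worth flagging in the write-up is the distinction between the restart event $A$ (``all zeros'') and the larger losing event ``did not win in round one'': it is the former, not the latter, that appears in the denominator, and this in turn relies on the disjointness of $A$ and $B$ to keep the geometric sum clean.
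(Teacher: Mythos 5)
Your proposal is correct and follows essentially the same route as the paper: both rest on the observation that the optimal rule is unchanged and that the win probability becomes $\mathcal{V}(p_1,\ldots,p_n)/(1-\prod_{i=1}^n(1-p_i))$. The only cosmetic difference is that you derive this via the renewal equation $\mathcal{V}^{*}=P(B)+P(A)\,\mathcal{V}^{*}$, whereas the paper states it directly as the conditional probability of winning given $\sum_{i=1}^{n}I_{i}>0$; your version makes the justification for that conditioning slightly more explicit.
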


\begin{proof}
Obviously, the optimal strategy, with this rule, is the same as in the game
in its original version. The difference lies only in the probability of
winning, which is conditioned by $\sum_{i=1}^{n}I_{i}>0$. Thus, bearing in
mind that
\begin{equation*}
P\left( \sum_{i=1}^{n}I_{i}>0\right) =1-P\left(
I_{1}=I_{2}=...=I_{n}=0\right) =1-\prod_{i=1}^{n}(1-p_{i}),
\end{equation*}
we have that
\begin{equation*}
\mathcal{V}^{\ast }(p_{1},...,p_{n})=P\left( \mathsf{WIN}%
|\sum_{i=0}^{n}I_{i}>0\right) =\frac{\mathcal{V}(p_{1},...,p_{n})}{%
1-\prod_{i=1}^{n}(1-p_{i})}.
\end{equation*}
\end{proof}

The cases in which  {$1\geq R_{1}$ and $1< R_{1}$} require a different treatment.

\subsection{The case in which  {$1\geq R_{1}$} }

\begin{equation*}
\label{0}
\end{equation*}

\begin{teor}
\label{1} If  { $1\geq R_{1}$}   for the game $\mathcal{G}%
(p_1,...,p_n)$, then

\begin{equation*}
\mathcal{V}^*(p_1,...,p_n)>\frac{R_1}{-1 + e^{R_1}} { \geq } \frac{1}{-1 + e }
=0.5819...
\end{equation*}
\end{teor}

\begin{proof}  {Taking into account  {(\ref{repetir})}, let us now consider \begin{equation*}
\mathcal{V}^*(x_1,...,x_n)=\frac{\left(\sum_{i=1}^n\frac{x_i}{1-x_i}\right)\prod_{i=1}^n (1-x_i)}{%
1-\prod_{i=1}^n (1-x_i)}.
\end{equation*}} Minimizing $\mathcal{V}^*$ with respect to $x_{i}$ subject to the constraint
\begin{equation*}
\sum_{i=1}^{n}\frac{x_{i}}{1-x_{i}}=R_{1},
\end{equation*}
shows (using Lagrange multiplier technique) that this minimum is obtained by
\begin{equation*}
x_1=x_2=...=x_n=\frac{R_1}{n+R_1}.
\end{equation*}
The minimum value is
\begin{equation*}
\frac{R_1 {\left( \frac{n}{n + R_1} \right) }^n}{1 - {\left( \frac{n}{n + R_1%
} \right) }^n}>\frac{R_1}{-1 + e^{R_1}} {\geq} \frac{1}{-1 + e } =0.5819...
\end{equation*}
\end{proof}

\subsection{The case in which  {$1< R_{1}<\infty$ }}
 
\begin{teor} {
\label{2} If $1< R_{1}<\infty$ for the game $\mathcal{G}%
(p_1,...,p_n)$, then
\begin{equation*}
\mathcal{V}^*(p_1,...,p_n)  >\frac{e^{-1}}{1-\prod_{i=1}^{n}(1-p_{i})}.
\end{equation*}}
\end{teor}

\begin{proof}
{Simply consider in \eqref{repetir}
the bound \eqref{cota} established in Theorem \ref{bruscota}.}

\end{proof}
{Here is a case where, even though $R_1>1$, $\mathcal{V}^{\ast }$ is lower-bounded by $\frac{1}{e-1}$.}

\begin{teor} If $p_i=1/n$ for $i=1,...,n$, then
$$ \mathcal{V}^{\ast }(p_{1},...,p_{n})>\frac{1}{e-1}=0.5819... $$
\end{teor}
\begin{proof}   { Obviusly, for $n=1$, $\mathcal{V}^*(1)=1>\frac{1}{e-1}$.}

For $n\geq 2$, the optimal threshold is $ \mathbf{s} = 2 $, seeing as $$R_2=(n-1)\frac{1/n}{1-1/n}=1\textrm{ and }R_3=(n-2)\frac{1/n}{1-1/n}<1 $$ and hence
$$\mathcal{V}^*(p_1,...,p_n) =\frac{R_2Q_2}{
1-\prod_{i=1}^n (1-p_i)}=\frac{\prod_{i=2}^n (1-p_i)}{%
1-\prod_{i=1}^n (1-p_i)}=\frac{(1-1/n)^{n-1}}{1-(1-1/n)^n}>\frac{1}{e-1}.$$
\end{proof}

\section{Variant II: The player can predict that there will be no $1^{\prime}s$}
In the game in its original version, one of the ways the player loses is
that all the variables are zero. We speculate what will occur if, as an
initial possible move, we allow the player to predict that all the random
variables will have the value 0 (i.e. there will not be $1^{\prime }s$). The
answer is simple: the new game is equivalent to the original game adding a
first random variable, $I_{0}$ with parameter $p_{0}=1$. Effectively,
stopping at stage 0 in the standard game is equivalent to predicting that
there will be no $1^{\prime }s$.  {In this variant, if the player predicts that all the random
variables will have the value 0, the probability of winning is  $ \prod_{i=1}^{n}(1-p_{i})=Q_{1} $
and the probability of winning in the standard game, when the optimal threshold is $\mathbf{s}$, is $\mathcal{V}(p_{1},...,p_{n})=R_\mathbf{s}  Q_\mathbf{s}.$ The probability of winning, let us call it $\mathcal{V^{\ast\ast }}(p_{1},...,p_{n})$, is hence
$$\mathcal{V^{\ast\ast }}(p_{1},...,p_{n})=\max\{Q_1,Q_\mathbf{s}R_\mathbf{s}\}.$$
Thus, the optimal strategy for \textbf{Variant II} is as follows:}

 {\begin{enumerate}
  \item If $Q_{1}=Q_\mathbf{s}R_\mathbf{s} $,    {it makes no difference} whether the player predicts  that there will be no $%
1^{\prime }s$ or plays the standard game.
  \item If $Q_{1}<Q_\mathbf{s}R_\mathbf{s} $, then play the standard game.
  \item If $Q_{1}>Q_\mathbf{s}R_\mathbf{s} $, then predict that there will be no $1^{\prime }s$.
\end{enumerate}}

Having said so, we now have the following
result.

\begin{teor}
Taking $R_{1}=\sum_{i=1}^{n} \frac{p_{i}}{1-p_i} $, the optimal strategy for
\textbf{Variant II} is as follows:
 
\begin{enumerate}
  \item If $R_{1}=1$, it is indifferent to predict that there will be no $%
1^{\prime }s$ or play the standard game.

  \item If $R_{1}>1$, then play the standard game.

  \item If $R_{1}<1$, then predict that there will be no $1^{\prime }s$.
\end{enumerate}
In any case, the probability of winning, $\mathcal{V^{\ast
\ast }}(p_{1},...,p_{n})$, is greater than $1/e$.
\end{teor}

 \begin{proof} {
It suffices to bear in mind that the probability of winning when betting
that there will be no $1^{\prime }s$ is  $ \prod_{i=1}^{n}(1-p_{i})=Q_{1} $
and the probability of winning the standard game, when the optimal threshold is $\mathbf{s}$, is $\mathcal{V}(p_{1},...,p_{n})=R_\mathbf{s}  Q_\mathbf{s}.$}

(1) If $R_1=1$, the optimal threshold is $\mathbf{s}=1$ and we have
  $$R_{\mathbf{s}}Q_{\mathbf{s}}=R_1Q_1=Q_1.$$
(2) If $R_{1}>1$, the optimal threshold is $\mathbf{s}\geq 1$ with $R_{\mathbf{s}}\geq 1$ and we have
  {{ $$\textrm{ if }\mathbf{s}=1,\textrm{ then  } R_{\mathbf{s}}Q_{\mathbf{s}}=R_{1}Q_{1}> Q_{1}.$$}
 $$\textrm{ if } \mathbf{s} >1, R_{\mathbf{s}}Q_{\mathbf{s}}\geq  Q_{\mathbf{s}} > Q_{1}.$$}
(3) If $R_{1}<1$, the optimal threshold is $\mathbf{s}=1$ and we have
  $$ R_{\mathbf{s}}Q_{\mathbf{s}}= R_{1}Q_{1}<Q_{1} .$$
The probability of winning for this variant is greater than $1/e$ because,
as has been stated, it is actually equivalent to a standard game with $p_{0}=1$.
\end{proof}
In summary, if the probability of winning for the standard game is not
greater than $1/e$, then the probability that all the variables are zero is
greater than $1/e$. And vice versa: if the probability that all the
variables are zero is not greater than $1/e$, then the probability of
winning for the standard game is greater than $1/e$. Of course it can also
occur that the probability is greater than $1/e$ in both cases, but it can
never be the case that both probabilities are simultaneously less than $1/e$.

\section{Summary and conclusions}

We first present the results related to the bounds for the probability of
winning. Finally, we find that Variant I always has a higher probability of
winning than Variant II, except in the case of $p_{i}=1$ for some $i$, in
which case the three games are in fact equivalent.

\begin{teor}
Let $\mathbf{s}$ be the optimal threshold for the game $\mathcal{G}%
(p_1,...,p_n)$ and \newline  { $p:= \min\{p_i: i=\mathbf{s},...,n\}$,} then

 {
\begin{equation*}
\mathcal{V}(p_1,...,p_n)\geq
\begin{cases}
{{\left( 1 - p \right) }^ {-1 + \frac{1}{p}}}  {>\frac{1}{e}} , & \text{if $1\leq R_1 < \infty$ }; \\
R_1\,{\left( \frac{n }{n + R_1 } \right) }^{n }, & \text{if $R_1<1$}; \\\max\{e^{-R_{\mathbf{s}+1}} , {{\left( 1 - p \right) }^ {-1 + \frac{1}{p}}}\}, &   { \text{if $R_1=\infty $}}.
\end{cases}
\end{equation*}}

\end{teor}

\begin{proof}

For the case $1\leq R_{1}< \infty $, see Theorem \ref{mas1}.

For the case $ R_{1}<1$, see Theorem \ref{menos1}.

For the case   {$ R_1=R_{\mathbf{s}}=\infty$}, see Theorems \ref{mas1} and \ref{inf}.

\end{proof}

\begin{teor}
Let $\mathbf{s}$ be the optimal threshold for the game $\mathcal{G}%
(p_{1},...,p_{n})$, then
\begin{equation*}
\mathcal{V^{\ast }}(p_{1},...,p_{n})
\begin{cases}
\geq\frac{R_{1}{\left( \frac{n}{n+R_{1}}\right) }^{n}}{1-{\left( \frac{n}{n+R_{1}%
}\right) }^{n}}>\frac{R_{1}}{ e^{R_{1}}-1}\geq \frac{1}{e-1}, & \text{if $%
R_{1}\leq 1$ }; \\ \geq {\frac{e^{-1}}{1-\prod_{i=1}^{n}(1-p_{i})}}, & \text{if $1<R_{1}<\infty$}; \\
= \mathcal{V}(p_{1},...,p_{n}), &  \text{if $R_{1}=\infty $}.%
\end{cases}%
\end{equation*}
\end{teor}

\begin{proof}
For the case $R_{1}\leq 1$, see Theorem \ref{1}.

For the case $1< R_{1}<\infty $, see Theorem \ref{2}.

  {For the case $R_1=\infty $, $p_\mathbf{s}=1$, and, consequently,} $\mathcal{V^*}(p_1,...,p_n)=\mathcal{V}%
(p_1,...,p_n)$.
\end{proof}

\begin{teor}
If $p_{i}=1$ for some $i=1,...,n$,  then all three games are equivalent by construction.
Otherwise, 
 
\begin{equation*}
\mathcal{V}(p_{1},...,p_{n})\leq \mathcal{V^{\ast \ast }}(p_{1},...,p_{n})<\mathcal{V^{\ast }}(p_{1},...,p_{n}),
\end{equation*}
\end{teor}

\begin{proof}   {
If $R_{\textbf{s}}\geq 1,\textrm{ then } R_{1}\geq 1$ and, by Theorem 11, } we have $\mathcal{V}(p_{1},...,p_{n})=\mathcal{V^{\ast
\ast }}(p_{1},...,p_{n})$. Moreover,
\begin{equation*}
\mathcal{V^{\ast }}(p_{1},...,p_{n})=\frac{\mathcal{V}(p_{1},...,p_{n})}{%
1-\prod_{i=1}^{n}(1-p_{i})}>\mathcal{V}(p_{1},...,p_{n}).
\end{equation*}
If $R_{s}<1$, then $\mathcal{V}(p_{1},...,p_{n})<\mathcal{V^{\ast \ast }}%
(p_{1},...,p_{n})$. Moreover,   {in this case $R_1 < 1$, and so, by Theorem 13,}
\begin{equation*}
\mathcal{V^*}(p_1,...,p_n)> \frac{R_1}{-1 + e^{R_1} }\text{ and }\mathcal{%
V^{**}}(p_1,...,p_n) =\prod_{i=1}^n(1-p_i).
\end{equation*}
Maximizing $\prod_{i=1}^{n}(1-x_{i})$ with respect to $x_{i}$ subject to the
constraint $R_{1}=\sum_{i=1}^{n}\frac{x_{i}}{1-x_{i}}$ shows that this
maximum is reached when all the $x_{i}$ are 0 except for one,  which
takes the value  $\frac{R_{1}}{1+R_{1}}$. So that
\begin{equation*}
\mathcal{V^{\ast \ast }}(p_{1},...,p_{n})\leq 1-\frac{R_{1}}{1+R_{1}}=\frac{1%
}{1+R_{1}}<\frac{R_{1}}{-1+e^{R_{1}}}<\mathcal{V^{\ast }}(p_{1},...,p_{n}).
\end{equation*}
\end{proof}

\textbf{Acknowledgements} The author wish to thank the anonymous referees for their many insightful comments
and suggestions that helped to improve the paper.

\end{document}